\theoremstyle{plain}
\newtheorem{theorem}{Theorem}[section]
\numberwithin{equation}{section}
\newtheorem{corollary}{Corollary}
\newtheorem{example}{Example}
\newtheorem{remark}{Remark}
\theoremstyle{definition}
\DeclareMathOperator*{\essinf}{ess\,inf}
\DeclareMathOperator*{\esssup}{ess\,sup}
\title{On Ambarzumyan-type Inverse Problems of Vibrating String Equations}
\author{Yuri Ashrafyan\thanks{Yuri Ashrafyan: KAUST, CEMSE Division, Thuwal, 23955-6900, KSA, E-mail: yuri.ashrafyan@kaust.edu.sa}
\ and \ Dominik L.~Michels\thanks{ Dominik L.~Michels: KAUST, CEMSE Division, Thuwal, 23955-6900, KSA, E-mail: dominik.michels@kaust.edu.sa}
}
\date{}
\begin{document}

\maketitle

\begin{abstract}
We consider the inverse spectral theory of vibrating string equations. In this regard, first eigenvalue Ambarzumyan-type uniqueness theorems are stated and proved subject to separated, self-adjoint boundary conditions. More precisely, it is shown that there is a curve in the boundary parameters' domain on which no analog of it is possible. Necessary conditions of the $n$-th eigenvalue are identified, which allows to state the theorems. In addition, several properties of the first eigenvalue are examined. Lower and upper bounds are identified, and the areas are described in the boundary parameters' domain on which the sign of the first eigenvalue remains unchanged. This paper contributes to inverse spectral theory as well as to direct spectral theory.
\end{abstract}

\textit{Keywords:}\; Ambarzumyan theorem, first eigenvalue, inverse problems, vibrating string equations.

\textit{MSC 2010:}\; 34A55, 34L15

\vspace{5mm}

\section{Introduction}\label{sec1}
When dealing with direct problems, one considers a physical model and calculates a specific output given a specific input. In contrast, inverse problems are dealing with the inversion of this model based on measured or observed outputs, i.e.~we consider a mathematical framework which is used to obtain information about parameters of a system from observed measurements. This framework is often useful since it provides information about physical parameters of a system that can usually not directly be observed. As a consequence, inverse problem theory is being developed intensively.

The inverse spectral problems aiming for the reconstruction of operator using spectral data, such as the spectrum (set of eigenvalues), norming constants ($L^2$-norms of eigenfunctions), nodal points (roots of eigenfunctions), and other quantities.

Historically, Ambarzumyan did pioneer work \cite{Ambarzumyan:1929} with respect to the theory of inverse problems. 
It is easy to calculate (direct problem) that the eigenvalues of the Sturm-Liouville boundary value problem $-y'' = \mu y$, $y'(0) = y'(\pi) = 0$ are $n^2, \ n \geq 0$.
In 1929, Ambarzumyan proved the inverse assertion, if the eigenvalues of the boundary value problem $-y'' + q(x) y = \mu y$, $y'(0) = y'(\pi) = 0$ are $n^2, \ n \geq 0$, then $q(x) = 0$ a.e.~on $(0, \pi)$.

Later, Borg \cite{Borg:1946} showed that with general boundary conditions more information in addition to the spectrum is required in order to uniquely reconstruct the function $q(x)$.
In the same work he showed that two spectra (for different fixed boundary conditions) are sufficient for the unique determination of a problem.
Other fundamental work was done by Marchenko \cite{Marchenko:1950}, who showed that if two problems have the same set of eigenvalues and norming constants, then they coincide.
These results were very important and they opened the door for further investigations in inverse spectral theory.
Many other kinds of uniqueness theorems for inverse Sturm-Liouville problems have been stated and developed, we just mention several of them here~ \cite{Levinson:1949, Hochstadt-Lieberman:1978, McLaughlin-Rundell:1987, Gesztesy-Simon:2000, Freiling-Yurko:2001, Harutyunyan:2009, Wei-Wei:2016, Ashrafyan:2017}.

Let $L(q, \gamma, \delta)$ denote the self-adjoint Sturm-Liouville operator generated by the boundary value problem
\begin{gather}
-y''+q(s)y = \mu y,\qquad s\in (0, \pi),\; \mu \in {\mathbb C},         \label{eq1.4}\\
y(0) \cos \gamma + y'(0) \sin \gamma = 0, \quad \gamma \in (0, \pi],    \label{eq1.5}\\
y(\pi) \cos \delta + y'(\pi) \sin \delta = 0, \quad \delta \in[0, \pi), \label{eq1.6}
\end{gather}
where the potential function $q$ is real-valued and summable.
It is known, that the spectrum of $L(q, \gamma, \delta)$ is discrete and consists of real, simple eigenvalues, which we denote by $\mu_n = \mu_n(q, \gamma, \delta)$, $n \geq 0$,
$$\mu_0(q, \gamma, \delta) < \mu_1(q, \gamma, \delta) < \ldots < \mu_n(q, \gamma, \delta) < \ldots,$$
emphasizing the dependence of $\mu_n$ on $q$, $\gamma$ and $\delta$ (see \cite{Levitan-Sargsyan:1988, Marchenko:1977, Harutyunyan:2008}).
There are many generalizations of Ambarzumyan's theorem for the Sturm-Liouville problem in various directions ~(see e.g.~\cite{Freiling-Yurko:2001, Kuznezov:1962, Chakravarty-Acharyya:1988, Freiling-Yurko:2001, Yurko:2013, Chern-Law-Wang:2001, Yang-Huang-Yang:2010, Yilmaz-Koyunbakan:2014, Ashrafyan:2018}, and the references therein).

Particularly, in \cite{Freiling-Yurko:2001}, it is shown that it is not necessary to specify the whole spectrum: information regarding the first eigenvalue is sufficient.
Precisely, if $\mu_0 = \frac{1}{\pi}\int_{0}^{\pi} q(x) dx$, then $q(x) = \mu_0$, a.e.~on $(0, \pi)$.
This type of problem setting is called a first eigenvalue Ambarzumyan-type inverse problem.

Let $A$ and $\tilde A$ be two operators.
In what follows, if a certain symbol $\tau$ denotes an object related to the operator $A$, then $\tilde \tau$ will denote a similar object related to $\tilde A$ and $\hat \tau := \tilde \tau - \tau$, $\check \tau := {\tilde \tau}/{\tau}$.
We recall the inner product $\left(y, z\right) = \int_{0}^{1} y(x) \overline{z(x)} dx$.

In \cite{Yurko:2013}, Yurko proved the following generalization of Ambarzumyan theorem for Sturm-Liouville problem (it's true for any self-adjoint boundary conditions).
\begin{theorem}[Yurko \cite{Yurko:2013}]\label{thm1.2}
  Let $q, \, \tilde q \in L^1_\mathbb{R}(0, \pi)$. If
  \begin{equation*}
    \mu_0 - \tilde \mu_0 = \dfrac{(\hat{q} \tilde \varphi_0, \tilde \varphi_0)}{(\tilde \varphi_0, \tilde \varphi_0)},
  \end{equation*}
  where $\tilde \varphi_0$ is an eigenfunction of $\tilde L$ related to $\tilde \mu_0$.
  Then $q(x) = \tilde q(x) + \mu_0 - \tilde \mu_0$, a.e.~on $(0, \pi)$.
\end{theorem}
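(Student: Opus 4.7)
The plan is to use the Rayleigh--Ritz variational characterization of the first eigenvalue $\mu_0$ of $L=L(q,\gamma,\delta)$, namely
\begin{equation*}
\mu_0 \;=\; \min_{y\neq 0}\frac{(Ly,y)}{(y,y)},
\end{equation*}
where the minimum runs over nonzero admissible $y$ satisfying \eqref{eq1.5}--\eqref{eq1.6} and is attained precisely at scalar multiples of the ground state $\varphi_0$. Since $\tilde L$ obeys the same boundary conditions, $\tilde\varphi_0$ is an admissible test function for $L$; the hypothesis of the theorem will be tailored so that testing against $\tilde\varphi_0$ saturates the Rayleigh inequality.

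Concretely, I would write $L=\tilde L+(q-\tilde q)$ and use $\tilde L\tilde\varphi_0=\tilde\mu_0\tilde\varphi_0$ to obtain
\begin{equation*}
(L\tilde\varphi_0,\tilde\varphi_0) \;=\; \tilde\mu_0(\tilde\varphi_0,\tilde\varphi_0)-(\hat q\tilde\varphi_0,\tilde\varphi_0).
\end{equation*}
Inserting this into $(L\tilde\varphi_0,\tilde\varphi_0)\ge\mu_0(\tilde\varphi_0,\tilde\varphi_0)$ and rearranging produces an inequality relating $\mu_0-\tilde\mu_0$ to $(\hat q\tilde\varphi_0,\tilde\varphi_0)/(\tilde\varphi_0,\tilde\varphi_0)$; the hypothesized identity converts it into an equality, so the infimum in Rayleigh's principle is attained at $\tilde\varphi_0$. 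Since the infimum is attained only on the ground-state eigenspace of $L$, the function $\tilde\varphi_0$ must itself be an eigenfunction of $L$ for $\mu_0$.

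Having shown that $\tilde\varphi_0$ satisfies both
\begin{equation*}
-\tilde\varphi_0''+q\tilde\varphi_0=\mu_0\tilde\varphi_0 \quad\text{and}\quad -\tilde\varphi_0''+\tilde q\tilde\varphi_0=\tilde\mu_0\tilde\varphi_0,
\end{equation*}
I would subtract the two equations to obtain $\bigl[(q-\tilde q)-(\mu_0-\tilde\mu_0)\bigr]\tilde\varphi_0=0$ on $(0,\pi)$. The ground-state eigenfunction of a self-adjoint Sturm--Liouville problem with separated boundary conditions has no zeros in $(0,\pi)$ (Sturm oscillation); hence $\tilde\varphi_0\neq 0$ almost everywhere, and dividing off the zero set yields $q(x)=\tilde q(x)+\mu_0-\tilde\mu_0$ a.e.~on $(0,\pi)$.

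The step I anticipate as most delicate is the implication ``equality in the Rayleigh inequality $\Rightarrow$ $\tilde\varphi_0$ is an eigenfunction at $\mu_0$''. The cleanest way to justify it is to expand $\tilde\varphi_0$ in an $L^2$-orthonormal basis of $L$-eigenfunctions and invoke the strict spectral gap $\mu_0<\mu_1$: the Rayleigh quotient exceeds $\mu_0$ by a strictly positive amount unless all higher-mode coefficients vanish. The remaining ingredients are standard bookkeeping: the boundary terms from integration by parts match because $L$ and $\tilde L$ share the boundary data \eqref{eq1.5}--\eqref{eq1.6}, and non-vanishing of the first eigenfunction in $(0,\pi)$ is a classical consequence of the Sturm oscillation theorem.
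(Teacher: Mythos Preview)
The paper does not supply its own proof of this theorem; it is quoted from Yurko without argument. Your variational route via the Rayleigh--Ritz principle is the standard one, and it is exactly the mechanism the paper invokes (tersely) when proving the string-equation analogue, Theorem~\ref{thm2.2}: there too the hypothesis is read as saying that the Rayleigh quotient of $\tilde\varphi_0$ with respect to the un-tilded operator equals the lowest eigenvalue, whence $\tilde\varphi_0$ must itself be a ground-state eigenfunction, after which subtracting the two eigenvalue equations and using non-vanishing of $\tilde\varphi_0$ on the open interval finishes. So your plan mirrors the paper's treatment of the parallel result.

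One caveat worth flagging: with the paper's convention $\hat q=\tilde q-q$, your Rayleigh computation actually yields
\[
\mu_0-\tilde\mu_0\;\le\;-\,\frac{(\hat q\,\tilde\varphi_0,\tilde\varphi_0)}{(\tilde\varphi_0,\tilde\varphi_0)},
\]
whereas the stated hypothesis carries the opposite sign; you were right to leave the direction of the inequality unspecified. Indeed, substituting the asserted conclusion $q=\tilde q+(\mu_0-\tilde\mu_0)$ back into the hypothesis as printed forces $\mu_0=\tilde\mu_0$. This is a sign slip in the paper's transcription rather than a flaw in your strategy; with $\hat q$ read as $q-\tilde q$ (or equivalently with the sign on the right-hand side flipped), your argument goes through verbatim.
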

Recently, one of the authors  \cite{Ashrafyan:2018} has proved another generalization of the first eigenvalue Ambarzumyan-type inverse problem.
\begin{theorem}[\cite{Ashrafyan:2018}]\label{thm1.3}
    Let $q, \, \tilde q \in L^1_\mathbb{R}(0, \pi)$. If
    \begin{equation*}
      \mu_0 - \tilde \mu_0 = \essinf \hat q  \quad \mbox{or} \quad \mu_0 - \tilde \mu_0 = \esssup \hat q,
    \end{equation*}
    then $q(x) = \tilde q(x) + \mu_0 - \tilde \mu_0$ a.e.~on $(0, \pi)$.
\end{theorem}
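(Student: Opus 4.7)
My plan is to reduce Theorem~\ref{thm1.3} to Yurko's Theorem~\ref{thm1.2}: I will show that the hypothesis $\mu_0 - \tilde\mu_0 = \essinf \hat q$ forces the weighted-average identity $\mu_0 - \tilde\mu_0 = (\hat q\tilde\varphi_0,\tilde\varphi_0)/(\tilde\varphi_0,\tilde\varphi_0)$ required there, and the conclusion follows at once. The $\esssup$ case is symmetric, obtained by swapping the roles of $(q,\varphi_0)$ and $(\tilde q,\tilde\varphi_0)$ (equivalently, by replacing $\hat q$ with $-\hat q$).

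Two ingredients drive the argument. The first is the elementary weighted-average bound
\[
\essinf \hat q \;\le\; \frac{(\hat q\,\tilde\varphi_0,\tilde\varphi_0)}{(\tilde\varphi_0,\tilde\varphi_0)} \;\le\; \esssup \hat q,
\]
valid because $\tilde\varphi_0^2\ge 0$ and $\hat q$ is summable. The second is the variational complement supplied by the min-max characterization of $\mu_0$. Plugging $\tilde\varphi_0$ into the Rayleigh quotient for $L$ and using $L\tilde\varphi_0 = \tilde L\tilde\varphi_0 - \hat q\,\tilde\varphi_0 = \tilde\mu_0\tilde\varphi_0 - \hat q\,\tilde\varphi_0$, one computes
\[
R_L[\tilde\varphi_0] \;=\; \tilde\mu_0 - \frac{(\hat q\,\tilde\varphi_0,\tilde\varphi_0)}{(\tilde\varphi_0,\tilde\varphi_0)},
\]
so that $\mu_0\le R_L[\tilde\varphi_0]$ translates into a sharp bound on $(\hat q\,\tilde\varphi_0,\tilde\varphi_0)/(\tilde\varphi_0,\tilde\varphi_0)$ in terms of $\tilde\mu_0-\mu_0$.

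Combining these two bounds with the hypothesis pinches the weighted average to precisely $\essinf\hat q$. Because $\hat q - \essinf\hat q \ge 0$ a.e., the equality
\[
\int_0^{\pi}\!\big(\hat q - \essinf\hat q\big)\,\tilde\varphi_0^{\,2}\,ds = 0
\]
forces the integrand to vanish a.e.; since the first eigenfunction of a self-adjoint Sturm-Liouville problem with separated boundary conditions has no zeros in the open interval $(0,\pi)$, we conclude $\hat q = \essinf\hat q = \mu_0 - \tilde\mu_0$ a.e., which is the desired identity $q = \tilde q + \mu_0 - \tilde\mu_0$. Equivalently, this verifies the hypothesis of Theorem~\ref{thm1.2} and finishes the proof.

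The main technical point to handle carefully is the sign alignment between the variational complement and the one-sided hypothesis: the pinching works only if $\essinf\hat q$ sits at the "tight" end of the inequality coming from $\mu_0\le R_L[\tilde\varphi_0]$, and this has to be arranged separately in the $\essinf$ and $\esssup$ cases (in the latter one plugs $\varphi_0$ into $R_{\tilde L}$ instead). Once the directions of inequality are lined up, the remaining steps are the standard "weighted average equals essential infimum implies the function is constant" lemma combined with the non-vanishing of the first eigenfunction.
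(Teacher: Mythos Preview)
The paper does not prove Theorem~\ref{thm1.3}; it is quoted from \cite{Ashrafyan:2018} and then used (as a black box) to derive Theorem~\ref{thm3.1}. So there is no in-paper argument to compare your proposal against; I evaluate your plan on its own merits.

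Your overall strategy is the right one and is essentially the standard proof: a Rayleigh-quotient upper bound for the lowest eigenvalue, the trivial sandwich $\essinf\hat q\le (\hat q\,w,w)/(w,w)\le\esssup\hat q$ for a nonnegative weight $w=\varphi_0^2$ or $\tilde\varphi_0^2$, and then the non-vanishing of the first eigenfunction on $(0,\pi)$ to turn ``weighted average equals an essential extremum'' into ``$\hat q$ is a.e.\ constant''. Since $L$ and $\tilde L$ share the same separated boundary conditions, $\tilde\varphi_0$ lies in the domain (equivalently, the form domain) of $L$ and vice versa, so the Rayleigh-quotient step is legitimate.

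There is, however, a concrete sign/alignment slip you should fix. With the paper's convention $\hat q=\tilde q-q$ your displayed identity $R_L[\tilde\varphi_0]=\tilde\mu_0-\dfrac{(\hat q\,\tilde\varphi_0,\tilde\varphi_0)}{(\tilde\varphi_0,\tilde\varphi_0)}$ is correct, and $\mu_0\le R_L[\tilde\varphi_0]$ yields
\[
\frac{(\hat q\,\tilde\varphi_0,\tilde\varphi_0)}{(\tilde\varphi_0,\tilde\varphi_0)}\;\le\;\tilde\mu_0-\mu_0 .
\]
Under the hypothesis $\mu_0-\tilde\mu_0=\essinf\hat q$ this upper bound equals $-\essinf\hat q$, which does \emph{not} squeeze against the lower bound $\essinf\hat q$; so the ``pinching'' you announce for the $\essinf$ case with the test function $\tilde\varphi_0$ fails as written. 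In fact the paper's stated convention $\hat q=\tilde q-q$ is inconsistent with the very conclusion $q=\tilde q+\mu_0-\tilde\mu_0$ (which reads $\hat q\equiv\tilde\mu_0-\mu_0$, incompatible with $\essinf\hat q=\mu_0-\tilde\mu_0$ unless $\mu_0=\tilde\mu_0$). The statement only makes sense with $\hat q=q-\tilde q$; adopting that convention, the correct pairing is the \emph{reverse} of what you wrote: plug $\tilde\varphi_0$ into $R_L$ for the $\esssup$ case (getting $\mu_0-\tilde\mu_0\le (\hat q\,\tilde\varphi_0,\tilde\varphi_0)/(\tilde\varphi_0,\tilde\varphi_0)\le\esssup\hat q=\mu_0-\tilde\mu_0$, a genuine pinch), and plug $\varphi_0$ into $R_{\tilde L}$ for the $\essinf$ case. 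Once you straighten out the convention and swap the two cases, the rest of your argument goes through verbatim.
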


The main aim of this paper is to investigate first eigenvalue Ambarzumyan-type theorems for boundary value problems of vibrating string equations:
\begin{gather}
- u'' = \lambda p(x) u, \quad  x \in (0, 1), \quad \lambda \in \mathbb{C},  \label{eq1.1} \\
u(0) \cos \alpha + u'(0) \sin \alpha = 0, \quad \alpha \in (0, \pi],        \label{eq1.2} \\
u(1) \cos \beta + u'(1) \sin \beta = 0, \quad \beta \in [0, \pi),           \label{eq1.3}
\end{gather}
where the density function $p$ is piecewise continuous, positive, bounded away from $0$, real-valued and $\lambda$ is the spectral parameter.
This equation describes small transversal vibrations of a string of a linear density $p$ on the interval $[0, 1]$ (see e.g.~\cite{Wei-Xu:2010}).
By $S (p, \alpha, \beta)$ we denote the self-adjoint operator generated by problem \eqref{eq1.1}--\eqref{eq1.3}.
It is well-known, that the spectrum of the operator $S(p, \alpha, \beta)$ is discrete and consists of real, simple eigenvalues, which we denote by $\lambda_n = \lambda_n(p, \alpha, \beta)$, $n \geq 0$,
$$\lambda_0(p, \alpha, \beta) < \lambda_1(p, \alpha, \beta) < \ldots < \lambda_n(p, \alpha, \beta) < \ldots,$$
emphasizing the dependence of $\lambda_n$ on $p$, $\alpha$ and $\beta$.

The classical Ambarzumyan's theorem for the operator $S(p, \pi/2, \pi/2)$ on the interval $[0, a]$ can be stated as follows: if it is known that the eigenvalues are $\pi^2 n^2/ a^2$, the question is if we can conclude that $p(x)=1$ on $[0, a]$.
In \cite{Shen:2007}, Shen constructed a counterexample to this assertion, and showed that one needs additional information about the density function to formulate an inverse problem with one spectrum, such as the function $p(x)$ being even on $[0, a]$, or the values of $p'(0)$ and $p'(a)$ are known.
Hence, in general, there is no analog to the Ambarzumyan theorem for string equations in a classical form.

In Section \ref{sec2}, it is shown that first eigenvalue Ambarzumyan-type theorems for the $S(p, \alpha, \beta)$ operator are valid, but not for the whole domain $\alpha, \beta \in (0, \pi] \times [0, \pi)$. It is explained why for some boundary conditions first-eigenvalue inverse problems are impossible.
In Section \ref{sec3}, bounds for the lowest eigenvalue are given, and the areas in the boundary parameters’ domain are described, on which it keeps its sign.

\section{Main Results}\label{sec2}
Let $\varphi(x) = \varphi(x,\lambda, \alpha,p)$ be the solution of \eqref{eq1.1}, which satisfies the initial conditions
\begin{equation*}
\varphi(0,\lambda, \alpha,p)=\sin  \alpha,\quad \varphi'(0,\lambda, \alpha,p)=-\cos  \alpha. \\
\end{equation*}
The eigenvalues $\lambda_n$ are the solutions of the equation
\begin{equation*}
\varphi(1, \lambda,  \alpha)\cos  \beta + \varphi'(1, \lambda, \alpha)\sin \beta=0.
\end{equation*}
It is easy to see that the functions $\varphi_n(x):=\varphi(x, \lambda_n,  \alpha, p)$, $n \geq 0$, are the eigenfunctions, corresponding to the eigenvalue $\lambda_n$.

Our first eigenvalue Ambarzumyan-type theorems for vibrating string equation are as follows.
\begin{theorem}\label{thm2.1}
    Let $\alpha, \beta \in (0, \pi] \times [0, \pi)$ and $\cos \alpha \cos \beta - \sin(\alpha - \beta) \neq 0$, for $\alpha, \beta \in [\pi/4, \pi] \times [0, 3\pi/4]$.
    If
    \begin{equation*}
      \lambda_0 = \tilde \lambda_0  \max_{x \in [0, 1]}  \check p(x)  \qquad \mbox{or} \qquad
      \lambda_0 = \tilde \lambda_0  \min_{x \in [0, 1]}  \check p(x),
    \end{equation*}
    then $p(x) = {\tilde \lambda_0}/{\lambda_0} \tilde p(x)$ on $[0, 1]$.
\end{theorem}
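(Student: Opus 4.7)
The plan is to transplant the Rayleigh--quotient test-function argument that underlies Theorem~\ref{thm1.2} to the $p$-weighted setting. Multiplying $-u''=\lambda p u$ by $u$, integrating on $[0,1]$, and using \eqref{eq1.2}--\eqref{eq1.3} to eliminate $u'(0)$ and $u'(1)$ produces the energy form
\[
E[u] \;=\; \int_0^1 (u'(x))^2\, dx \;+\; u(1)^2 \cot\beta \;-\; u(0)^2 \cot\alpha,
\]
with the obvious modification that a Dirichlet endpoint ($\sin\alpha=0$ or $\sin\beta=0$) drops the corresponding boundary term and enters the admissible class as an essential condition. This gives the variational characterisation $\lambda_0 = \inf_u E[u] / \int_0^1 p u^2\, dx$, realised at $\varphi_0$; because $\alpha$ and $\beta$ are shared by $S$ and $\tilde S$, the \emph{same} form $E$ governs both problems, so $\tilde\varphi_0$ and $\varphi_0$ are mutually admissible test functions.

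Inserting $\tilde\varphi_0$ into the Rayleigh quotient of $S(p,\alpha,\beta)$ and $\varphi_0$ into that of $\tilde S(\tilde p,\alpha,\beta)$, and using $E[\tilde\varphi_0] = \tilde\lambda_0 \int \tilde p\, \tilde\varphi_0^2\, dx$ together with $\tilde p = \check p\, p$, yields
\[
\lambda_0 \int_0^1 p\, \tilde\varphi_0^2\, dx \;\le\; \tilde\lambda_0 \int_0^1 \check p\, p\, \tilde\varphi_0^2\, dx, \qquad
\tilde\lambda_0 \int_0^1 \check p\, p\, \varphi_0^2\, dx \;\le\; \lambda_0 \int_0^1 p\, \varphi_0^2\, dx.
\]
The right-hand sides are weighted means of $\check p$ against the density $p u^2$. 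By Sturm oscillation the ground states $\varphi_0$ and $\tilde\varphi_0$ do not vanish on $(0,1)$, so these weights are strictly positive a.e., and each weighted mean lies strictly in $\bigl(\min_{[0,1]}\check p,\, \max_{[0,1]}\check p\bigr)$ unless $\check p$ is a.e.~constant. Provided $\tilde\lambda_0>0$ (the opposite sign being symmetric), the two displays contract to the two-sided bound $\tilde\lambda_0 \min\check p \;\le\; \lambda_0 \;\le\; \tilde\lambda_0 \max\check p$.

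The hypothesis $\lambda_0=\tilde\lambda_0\max\check p$ saturates the upper bound, so $\int_0^1 (\max\check p-\check p)\,p\,\tilde\varphi_0^2\,dx=0$; the integrand being nonnegative forces $\check p\equiv\max\check p$ a.e., and the saturation then reads $\lambda_0=\tilde\lambda_0\check p$, identifying the constant as $\lambda_0/\tilde\lambda_0$ and giving $p=(\tilde\lambda_0/\lambda_0)\tilde p$. The case $\lambda_0=\tilde\lambda_0\min\check p$ is the mirror image, carried out with $\varphi_0^2$ as the weight. The main obstacle I expect is the sign bookkeeping that supplies a uniform sign for $\tilde\lambda_0$: a direct check shows that the ODE $u''=0$ admits a nontrivial solution of \eqref{eq1.2}--\eqref{eq1.3} precisely on the curve $\cos\alpha\cos\beta-\sin(\alpha-\beta)=0$, which is therefore the (density-independent) locus where $0$ is an eigenvalue of $S(p,\alpha,\beta)$; this is exactly the curve excluded in the statement, and the required sign information should come from the bounds established in Section~\ref{sec3}. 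Off that curve, $\lambda_0,\tilde\lambda_0\neq 0$ share the same sign because $\check p>0$, which is all the argument needs; the Dirichlet endpoint configurations are absorbed by the natural adjustment of the admissible class.
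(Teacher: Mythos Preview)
Your argument is correct, but it proceeds along a genuinely different route from the paper's. The paper does not invoke the variational characterisation at all: it multiplies \eqref{eq2.1} by $\tilde\varphi_0$, \eqref{eq2.2} by $\varphi_0$, subtracts, and integrates to obtain the Lagrange-type identity
\[
\int_0^1 \varphi_0(x)\,\tilde\varphi_0(x)\,\bigl[\lambda_0 p(x)-\tilde\lambda_0\tilde p(x)\bigr]\,dx \;=\; 0,
\]
with the boundary terms vanishing because both eigenfunctions satisfy the same conditions \eqref{eq1.2}--\eqref{eq1.3}. Writing $\lambda_0 p-\tilde\lambda_0\tilde p=\tilde\lambda_0\,p\,(\max\check p-\check p)$ under the hypothesis, and using that $\varphi_0\tilde\varphi_0$ has fixed sign on $(0,1)$ by Sturm oscillation, one concludes immediately. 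Your approach instead tests each ground state in the other problem's Rayleigh quotient, obtaining the two-sided bound $\tilde\lambda_0\min\check p\le\lambda_0\le\tilde\lambda_0\max\check p$ (for $\tilde\lambda_0>0$) as an intermediate step, and then reads off the conclusion from saturation. The paper's route is shorter and sidesteps the sign bookkeeping you flag, since the bracket has a definite sign for either sign of $\tilde\lambda_0\neq 0$; on the other hand, your route yields the inequality of Theorem~\ref{thm3.2} along the way rather than deducing it afterwards as a corollary, and makes the role of the excluded curve more transparent. Both arguments ultimately rest on the same two ingredients: non-vanishing of the ground states and the exclusion of $\tilde\lambda_0=0$ via the boundary-parameter hypothesis.
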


\begin{proof}
Write down the fact that $\varphi_0(x)$ and $\tilde\varphi_0(x)$ are the eigenfunctions of the operators $S(p, \alpha, \beta)$ and $S(\tilde p, \alpha, \beta)$ corresponding to the eigenvalues $\lambda_0$ and $\tilde \lambda_0$, respectively:
\begin{gather}
  -\varphi_0''(x) + \lambda_0 p(x) \varphi_0(x) =  0, \label{eq2.1} \\
  -\tilde \varphi_0''(x) + \tilde \lambda_0 \tilde p(x) \tilde \varphi_0(x) = 0. \label{eq2.2}
\end{gather}
Multiplying \eqref{eq2.1} by $\tilde \varphi_0(x)$, \eqref{eq2.2} by $\varphi(x)$ and subtracting from the first equation the second one, we obtain
\begin{equation*}
\begin{aligned}
  - \dfrac{d}{dx}\left[ \varphi_0'(x) \tilde \varphi_0(x) - \varphi_0(x)\tilde \varphi_0'(x)\right] + \\
  + \varphi_0(x) \tilde \varphi_0(x)\left[\lambda_0 p(x) - \tilde \lambda_0 \tilde p(x)\right] = 0.
\end{aligned}
\end{equation*}
Integrating the latter from $0$ to $1$, we obtain
\begin{equation*}
\begin{aligned}
    - \varphi_0'(1) \tilde \varphi_0(1) + \varphi_0(1) \tilde \varphi_0'(1) + \\
    + \varphi_0'(0) \tilde \varphi_0(0) - \varphi_0(0) \tilde \varphi_0'(0) + \\
    + \int_{0}^{1} \varphi_0(x) \tilde \varphi_0(x) \left[\lambda_0 p(x) - \tilde \lambda_0 \tilde p(x) \right] dx = 0.
\end{aligned}
\end{equation*}
Since $\varphi_0(x)$ and $\tilde \varphi_0(x)$ satisfy the same boundary conditions, thus the first four terms vanish, and we obtain
\begin{equation*}
  \int_{0}^{1} \varphi_0(x) \tilde \varphi_0(x) \left[\lambda_0 p(x) - \tilde \lambda_0 \tilde p (x)\right] dx = 0.
\end{equation*}
In virtue of Sturm's oscillation theorem (see e.g.~\cite{Freiling-Yurko:2001, Hinton:2005, Simon:2005, Harutyunyan-Pahlevanyan-Ashrafyan:2013}), the product $\varphi_0(x) \tilde \varphi_0(x)$ has no zeros in interval $(0, 1)$.
This with the condition of the theorem complete the proof.
\end{proof}

\begin{theorem}\label{thm2.2}
    Let $\alpha, \beta \in (0, \pi] \times [0, \pi)$ and $\cos \alpha \cos \beta - \sin(\alpha - \beta) \neq 0$, for $\alpha, \beta \in [\pi/4, \pi] \times [0, 3\pi/4]$.
    If
    \begin{equation*}
        \lambda_0 = \tilde \lambda_0 \dfrac{\left( \check p \tilde \varphi_0, \tilde \varphi_0 \right)}{\left( \tilde \varphi_0, \tilde \varphi_0 \right)}
    \end{equation*}
    then $p(x) = {\tilde \lambda_0}/{\lambda_0} \tilde p(x)$ on $[0, 1]$.
\end{theorem}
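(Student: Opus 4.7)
The plan is to adapt the Rayleigh--Ritz argument underlying Yurko's Theorem~\ref{thm1.2} to the string setting, with the hypothesis playing the role of equality in the Rayleigh bound. Under the admissibility restriction on $(\alpha,\beta)$, the first eigenvalue admits the variational characterization
\[
\lambda_{0} \;=\; \inf_{u\in\mathcal{D}} \frac{\int_{0}^{1} u'(x)^{2}\,dx + \mathcal{B}_{\alpha,\beta}[u]}{\int_{0}^{1} p(x)\,u(x)^{2}\,dx},
\]
where $\mathcal{D}$ is the natural form domain attached to \eqref{eq1.2}--\eqref{eq1.3} and $\mathcal{B}_{\alpha,\beta}[u]$ is the boundary quadratic contribution (of the form $\cot\alpha\,u(0)^{2}-\cot\beta\,u(1)^{2}$ in the generic case $\sin\alpha\sin\beta\neq 0$, with the obvious modifications at Dirichlet endpoints). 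The excluded curve $\cos\alpha\cos\beta-\sin(\alpha-\beta)=0$ inside $[\pi/4,\pi]\times[0,3\pi/4]$ is precisely where this quadratic form ceases to be semibounded; the hypothesis rules it out so that the infimum is realized, and only on nonzero scalar multiples of $\varphi_{0}$.

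Next I would test with $u=\tilde\varphi_{0}$, admissible because it satisfies the same boundary conditions, and simplify the numerator via the companion identity $\int \tilde\varphi_{0}'^{\,2}\,dx + \mathcal{B}_{\alpha,\beta}[\tilde\varphi_{0}] = \tilde\lambda_{0}\int \tilde p\,\tilde\varphi_{0}^{2}\,dx$, which comes from multiplying \eqref{eq2.2} by $\tilde\varphi_{0}$ and integrating by parts. Writing $\tilde p=\check p\cdot p$ in the numerator, the Rayleigh inequality becomes
\[
\lambda_{0} \;\le\; \tilde\lambda_{0}\,\frac{\int_{0}^{1}\check p(x)\,p(x)\,\tilde\varphi_{0}(x)^{2}\,dx}{\int_{0}^{1} p(x)\,\tilde\varphi_{0}(x)^{2}\,dx} \;=\; \tilde\lambda_{0}\,\frac{(\check p\,\tilde\varphi_{0},\tilde\varphi_{0})_{p}}{(\tilde\varphi_{0},\tilde\varphi_{0})_{p}},
\]
where $(f,g)_{p}:=\int_{0}^{1} p\,f\,\overline{g}\,dx$ is the density-weighted pairing intrinsic to the string problem; read in this natural weighting, the hypothesis is exactly the equality case of Rayleigh. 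Ground-state rigidity then forces $\tilde\varphi_{0}=c\,\varphi_{0}$ for some $c\neq 0$. Substituting this proportionality into \eqref{eq2.1} and \eqref{eq2.2} and cancelling the common factor $c\,\varphi_{0}$, which is nonzero on $(0,1)$ by Sturm's oscillation theorem, produces $\lambda_{0}\,p(x)=\tilde\lambda_{0}\,\tilde p(x)$ pointwise on $(0,1)$; by one-sided continuity of $p,\tilde p$ at each jump and at the endpoints the identity extends to $[0,1]$, which is the asserted conclusion.

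The chief technical obstacle will be the careful setup of the variational principle uniformly over the admissible region of $(\alpha,\beta)$: the explicit form of $\mathcal{B}_{\alpha,\beta}$ changes between Dirichlet, Neumann, and Robin endpoints, and semiboundedness of the form (indispensable for Rayleigh--Ritz) is exactly what the condition $\cos\alpha\cos\beta-\sin(\alpha-\beta)\neq 0$ is enforcing on the delicate subregion $[\pi/4,\pi]\times[0,3\pi/4]$ --- this is the same obstruction flagged in Theorem~\ref{thm2.1} and in the impossibility remarks of the introduction. A secondary, purely notational, point is that the pairing $(\cdot,\cdot)$ in the hypothesis must be interpreted in the $p$-weighted sense natural to the string equation (rather than the unweighted Sturm--Liouville pairing recalled in Section~\ref{sec1}), without which the statement would not line up with the Rayleigh inequality that drives the proof.
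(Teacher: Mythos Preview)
Your approach is essentially the paper's: both arguments compute that the Rayleigh-type quotient of $\tilde\varphi_0$ with respect to $S(p,\alpha,\beta)$ equals $\lambda_0$, invoke the variational characterization of the ground state to conclude that $\tilde\varphi_0$ must itself be a ground-state eigenfunction of $S(p,\alpha,\beta)$, and then compare $-\tilde\varphi_0''=\lambda_0 p\,\tilde\varphi_0$ with $-\tilde\varphi_0''=\tilde\lambda_0\tilde p\,\tilde\varphi_0$ pointwise. The paper writes this in operator form (Rayleigh quotient of $-p^{-1}\tilde\varphi_0''$) rather than via the Dirichlet-type form $\int u'^2+\mathcal{B}_{\alpha,\beta}[u]$, but the mechanism is the same.

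One point you should correct: your explanation of the excluded curve is wrong. The quadratic form is \emph{always} semibounded on the domain $(0,\pi]\times[0,\pi)$; for Robin-type conditions the boundary terms are controlled by trace inequalities, and the string operator has discrete spectrum bounded below in every case. The reason the curve $\cos\alpha\cos\beta-\sin(\alpha-\beta)=0$ on $[\pi/4,\pi]\times[0,3\pi/4]$ is excluded is Theorem~\ref{thm3.3} and Remark~\ref{rem1}: on that curve $\lambda_0(p,\alpha,\beta)=0$ for \emph{every} density $p$, so the hypothesis $\lambda_0=\tilde\lambda_0\cdot(\ldots)$ is vacuous ($0=0$) and the conclusion $p=(\tilde\lambda_0/\lambda_0)\tilde p$ is undefined. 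Your variational argument itself does not need this exclusion to run; the exclusion is needed only to make the statement meaningful.

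Your remark that the pairing ought to be the $p$-weighted one $(f,g)_p=\int_0^1 p\,f\bar g\,dx$ is well taken: the Rayleigh quotient for which $\lambda_0$ is the minimum is $\int u'^2+\mathcal{B}_{\alpha,\beta}[u]\big/\int p\,u^2$, and testing with $\tilde\varphi_0$ yields $\tilde\lambda_0(\check p\,\tilde\varphi_0,\tilde\varphi_0)_p/(\tilde\varphi_0,\tilde\varphi_0)_p$, not the unweighted version. The paper's own proof, read literally with the unweighted pairing it defines in Section~\ref{sec1}, asserts that $(-p^{-1}\tilde\varphi_0'',\tilde\varphi_0)/(\tilde\varphi_0,\tilde\varphi_0)=\lambda_0$ forces $\tilde\varphi_0$ to be an eigenfunction, but $-p^{-1}d^2/dx^2$ is not symmetric in the unweighted $L^2$ and $\lambda_0$ is not in general the minimum of that ratio; your weighted formulation is the one that makes the equality-case rigidity step legitimate.
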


\begin{proof}
Taking into account the condition of the theorem, these simple relations are held
\begin{equation*}
\dfrac{\left( - p^{-1} \tilde \varphi_0, \tilde \varphi_0 \right)}{\left(\tilde \varphi_0, \tilde \varphi_0 \right)} =
\dfrac{\left( - \tilde p^{-1} \tilde \varphi_0, \check p \tilde \varphi_0 \right)}{\left(\tilde \varphi_0, \tilde \varphi_0 \right)} =
\tilde \lambda_0 \dfrac{\left(\check p \tilde \varphi_0, \tilde \varphi_0 \right)}{\left(\tilde \varphi_0, \tilde \varphi_0 \right)} =
\lambda_0
\end{equation*}
from which follows that $\tilde \varphi_0(x)$ is an eigenfunction of the operator $S(p, \alpha, \beta)$ with the eigenvalue $\lambda_0$.
Particularly, $-\tilde \varphi_0''(x) = \lambda_0 p(x) \tilde \varphi_0(x)$, thus we obtain $\lambda_0 p(x) = \tilde \lambda_0 \tilde p(x)$.
This completes the proof.
\end{proof}

\begin{remark}\label{rem1}
   Consider the curve $\cos \alpha \cos \beta - \sin(\alpha - \beta) = 0$, for $\alpha, \beta \in [\pi/4, \pi] \times [0, 3\pi/4]$, see Figure \ref{fig:zeros}.
   The first eigenvalue $\lambda_0(p,  \alpha, \beta)$ on this curve is always zero (the proof of this assertion is given in Theorem \ref{thm3.3}). Therefore first-eigenvalue Ambarzumyan-type theorems are impossible for this $\alpha$ and $\beta$.
\end{remark}

\begin{figure}[!ht]
\center
  \includegraphics[scale=0.5]{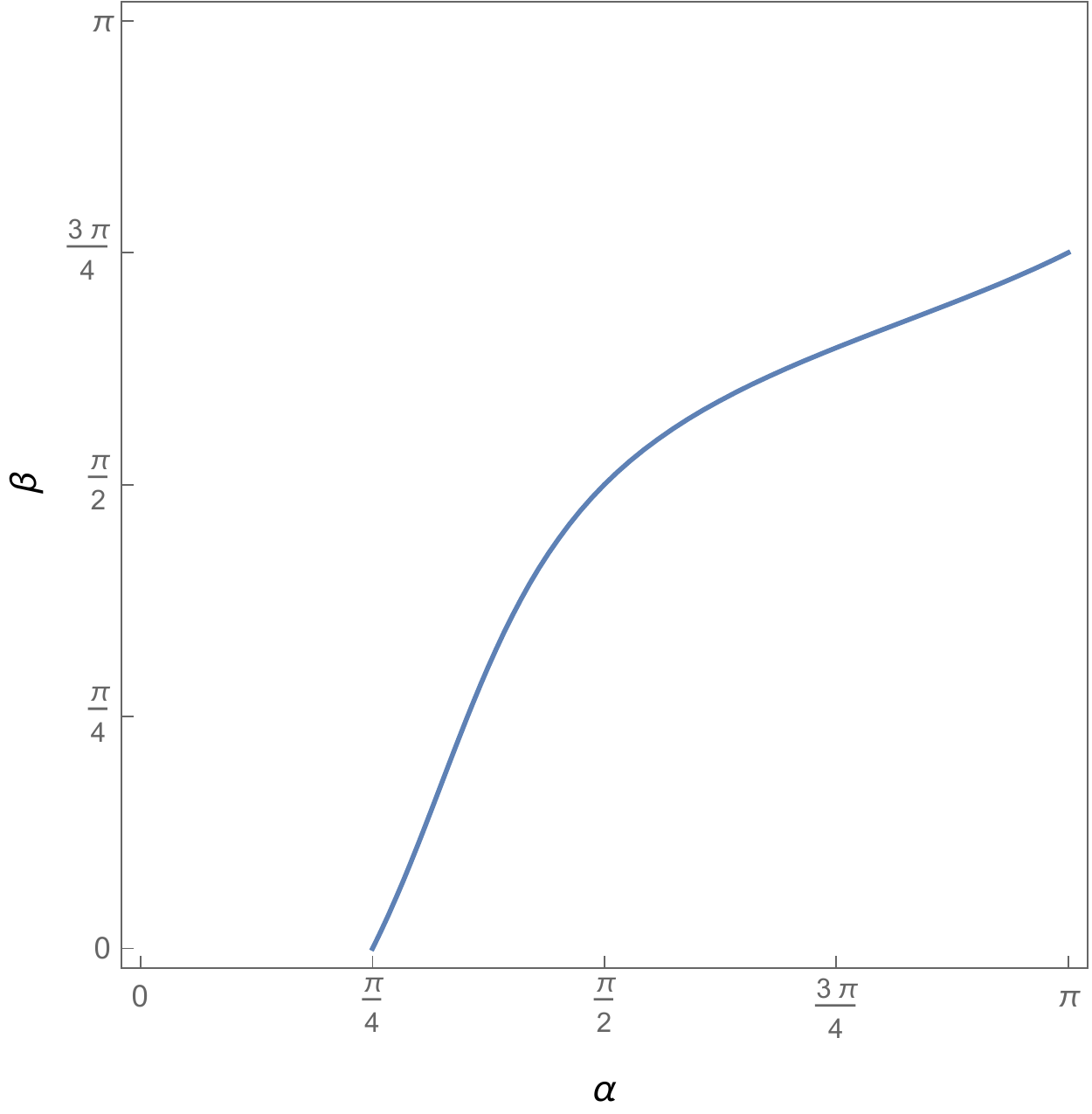}
  \caption{Illustration of the curve describing $\lambda_0(p,  \alpha, \beta)=0$.}
  \label{fig:zeros}
\endcenter
\end{figure}

\begin{example}
Consider the boundary value problem 
\begin{gather*}
- u'' = \lambda p(x) u,\\
u(0) = u(1) = 0.
\end{gather*}
Let $\tilde p(x) = 1$, then $\tilde \lambda_0 = \pi^2$ and $\tilde \varphi_0(x) = \sin \pi x$.

Theorem \ref{thm2.1} implies the following assertion.
\begin{corollary}
If
\begin{equation*}
    \lambda_0(p, \pi, 0) = \pi^2 \max p^{-1}(x)
\end{equation*}
then $p(x) = \pi^2/ \lambda_0$ on $[0, 1]$.
\end{corollary}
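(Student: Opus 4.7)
The plan is to apply Theorem~\ref{thm2.1} directly in the specific setting described by the example. First I would verify that the hypotheses of that theorem are satisfied. The Dirichlet conditions $u(0)=u(1)=0$ correspond to the parameter pair $(\alpha,\beta)=(\pi,0)$, which sits inside $(0,\pi]\times[0,\pi)$ and also inside the subregion $[\pi/4,\pi]\times[0,3\pi/4]$ where the non-degeneracy condition is imposed. A one-line computation gives $\cos\pi\cos 0 - \sin(\pi-0) = -1 \ne 0$, so the admissibility hypothesis holds.

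Next I would record the spectral data of the reference problem. For $\tilde p(x)\equiv 1$ the equation $-\tilde u''=\tilde\lambda\,\tilde u$ with the above Dirichlet conditions is elementary and gives first eigenvalue $\tilde\lambda_0=\pi^2$ with eigenfunction $\tilde\varphi_0(x)=\sin\pi x$. With this choice of reference density,
\begin{equation*}
\check p(x) \;=\; \frac{\tilde p(x)}{p(x)} \;=\; \frac{1}{p(x)} \;=\; p^{-1}(x),
\end{equation*}
so the hypothesis of the corollary, $\lambda_0(p,\pi,0) = \pi^2\max_{x\in[0,1]} p^{-1}(x)$, is literally the first alternative $\lambda_0=\tilde\lambda_0\max\check p(x)$ appearing in Theorem~\ref{thm2.1}.

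Invoking Theorem~\ref{thm2.1} then yields $p(x)=(\tilde\lambda_0/\lambda_0)\,\tilde p(x)=\pi^2/\lambda_0$ on $[0,1]$, which is the desired conclusion. I do not expect any genuine obstacle: the corollary is a transparent specialization of the main theorem, and the only items requiring any explicit mention are the identification of the Dirichlet problem with the parameter pair $(\pi,0)$, the check that the non-vanishing condition of Theorem~\ref{thm2.1} is met at this pair, and the remark that $\check p = p^{-1}$ for the unit reference density.
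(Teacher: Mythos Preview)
Your proposal is correct and matches the paper's approach exactly: the paper simply states that the corollary follows from Theorem~\ref{thm2.1} with $\tilde p\equiv 1$, $\tilde\lambda_0=\pi^2$, $(\alpha,\beta)=(\pi,0)$, and your write-up spells out precisely this specialization together with the routine check of the non-degeneracy condition.
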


Theorem \ref{thm2.2} implies the following assertion.
\begin{corollary}
If
\begin{equation*}
    \lambda_0(p, \pi, 0) = 2 \pi^2 \int_{0}^{1} p^{-1}(x) sin^2(\pi x) dx,
\end{equation*}
then $p(x) = \pi^2/ \lambda_0$ on $[0, 1]$.
\end{corollary}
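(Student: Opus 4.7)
The plan is to apply Theorem \ref{thm2.2} with the comparison density $\tilde p \equiv 1$ and boundary parameters $(\tilde\alpha, \tilde\beta) = (\alpha, \beta) = (\pi, 0)$. First I would verify admissibility: at $(\alpha,\beta) = (\pi, 0)$ one has $\cos\pi\cos 0 - \sin(\pi - 0) = -1 \neq 0$, so the curve described in Remark \ref{rem1} is avoided and Theorem \ref{thm2.2} applies.

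Next I would identify $\tilde\lambda_0$ and $\tilde\varphi_0$ explicitly. Under $\alpha = \pi$, $\beta = 0$, the boundary conditions \eqref{eq1.2}--\eqref{eq1.3} reduce to the Dirichlet conditions $u(0) = u(1) = 0$, and the problem $-u'' = \lambda u$ with $\tilde p \equiv 1$ has spectrum $\{n^2\pi^2\}_{n\geq 1}$. Hence $\tilde\lambda_0 = \pi^2$, and the corresponding eigenfunction is a scalar multiple of $\sin(\pi x)$.

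It remains to compute the ratio appearing in the hypothesis of Theorem \ref{thm2.2}. With $\tilde p \equiv 1$ we have $\check p = 1/p$, and a direct evaluation gives $(\tilde\varphi_0, \tilde\varphi_0) = c^2/2$ together with $(\check p\, \tilde\varphi_0, \tilde\varphi_0) = c^2 \int_0^1 p^{-1}(x)\sin^2(\pi x)\, dx$, where $c$ is an irrelevant normalization constant that cancels in the ratio. Therefore
\[
\tilde\lambda_0 \,\frac{(\check p\, \tilde\varphi_0, \tilde\varphi_0)}{(\tilde\varphi_0, \tilde\varphi_0)} \;=\; 2\pi^2 \int_0^1 p^{-1}(x)\sin^2(\pi x)\, dx,
\]
which, by the assumption of the corollary, equals $\lambda_0$. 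Applying Theorem \ref{thm2.2} yields $p(x) = (\tilde\lambda_0/\lambda_0)\,\tilde p(x) = \pi^2/\lambda_0$ on $[0,1]$.

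There is no substantive obstacle here: the statement is a direct specialization of Theorem \ref{thm2.2}, and the only actual computation is the evaluation of two $L^2$-inner products against $\sin(\pi x)$. The parallel corollary appearing just above is obtained by the same specialization of Theorem \ref{thm2.1}, with the $\max$ replacing the weighted average.
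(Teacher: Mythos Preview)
Your proposal is correct and follows exactly the route the paper takes: the corollary is stated in the paper simply as an immediate specialization of Theorem~\ref{thm2.2} with $\tilde p\equiv 1$, $\tilde\lambda_0=\pi^2$, $\tilde\varphi_0(x)=\sin(\pi x)$, and you have merely spelled out the routine verifications.
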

\end{example}

As we saw that the lowest eigenvalue contains significant information about the equation, a reasonable question may arise: can one state a uniqueness theorem having information regarding an arbitrary eigenvalue?
The answer is positive, but more information is demanded, than in the case of the lowest eigenvalue.
\begin{theorem}\label{thm2.3}
    Let $\lambda_n \neq 0$, for a fixed $n > 0$.
    If
    \begin{equation*}
        \lambda_n = \tilde \lambda_n \dfrac{\left( \check p \tilde \varphi_n, \tilde \varphi_n \right)}{\left( \tilde \varphi_n, \tilde \varphi_n \right)}
        \quad \mbox{and} \quad
        \lambda_n = \tilde \lambda_n  \max_{x \in [0, 1]}  \check p(x),
    \end{equation*}
    then $p(x) = {\tilde \lambda_n}/{\lambda_n} \tilde p(x)$ on $[0, 1]$.
\end{theorem}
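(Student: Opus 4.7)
The plan is to combine the ideas of the previous two theorems: the first hypothesis provides an integral identity (as in Theorem~\ref{thm2.2}), and the second hypothesis provides pointwise sign control (as in Theorem~\ref{thm2.1}). Squaring $\tilde\varphi_n$ in the integrand bypasses the fact that, for $n\geq 1$, $\tilde\varphi_n$ changes sign, so Sturm's theorem no longer gives the positivity that made the $n=0$ argument work.

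First, I would cross-multiply the first hypothesis to obtain
\begin{equation*}
\int_0^1 \bigl(\tilde\lambda_n \check p(x) - \lambda_n\bigr)\,\tilde\varphi_n^2(x)\,dx = 0.
\end{equation*}
Then I would use the second hypothesis to make the factor $\tilde\lambda_n \check p - \lambda_n$ have a definite sign: it equals $\tilde\lambda_n\bigl(\check p - \max_{[0,1]}\check p\bigr)$, and since $\check p > 0$ together with $\lambda_n \neq 0$ forces $\tilde\lambda_n$ to be nonzero and of the same sign as $\lambda_n$, the second factor $\check p - \max \check p \leq 0$ yields a one-signed product. Multiplying by $\tilde\varphi_n^2 \geq 0$ preserves this sign, and combined with the vanishing integral it forces the integrand to be zero almost everywhere.

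To conclude, I would invoke Sturm's oscillation theorem: $\tilde\varphi_n$ has only finitely many zeros in $[0,1]$, so $\tilde\varphi_n^2 > 0$ off a finite set. This yields $\tilde\lambda_n \check p(x) = \lambda_n$ a.e., and piecewise continuity of $p$ and $\tilde p$ upgrades this to the pointwise identity $p(x) = (\tilde\lambda_n / \lambda_n)\,\tilde p(x)$ on $[0,1]$. The one mildly delicate point, compared with Theorem~\ref{thm2.1} or the $n=0$ case, is the sign bookkeeping of $\tilde\lambda_n$, which may be positive or negative depending on the boundary data (cf.~Remark~\ref{rem1}); this is handled uniformly by observing that $(\tilde\lambda_n \check p - \lambda_n)\tilde\varphi_n^2$ carries the same sign as $-\tilde\lambda_n$ either way, so a one-signed integrable function with vanishing integral is zero a.e.\ in both cases.
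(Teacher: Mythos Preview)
Your proposal is correct and follows essentially the same route as the paper: derive the integral identity $\int_0^1(\lambda_n-\tilde\lambda_n\check p)\tilde\varphi_n^2\,dx=0$ from the first hypothesis, use the second hypothesis for sign control of $\lambda_n-\tilde\lambda_n\check p$, and invoke Sturm's oscillation theorem to see that $\tilde\varphi_n^2>0$ off a set of measure zero. Your explicit handling of the sign of $\tilde\lambda_n$ (via $\lambda_n=\tilde\lambda_n\max\check p$ with $\check p>0$) is more careful than the paper's terse ``this with the second condition yields $\lambda_n-\tilde\lambda_n\check p(x)=0$,'' but the underlying argument is identical.
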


\begin{proof}
From the first condition we obtain
\begin{equation*}
\int_{0}^{1} ( \lambda_n - \tilde \lambda_n \check p(x) ) \tilde \varphi_n^2(x) dx = 0.
\end{equation*}
In virtue of Sturm's oscillation theorem, it is known that the $n$-th eigenfunction has exactly $n$ isolated zeros in the open interval $(0, 1)$.
Thus, the measure of the set of all zeros is $0$.
This with the second condition of the theorem yields $\lambda_n - \tilde \lambda_n \check p(x) = 0$.
This completes the proof.
\end{proof}

For Sturm-Liouville problem similar result was proved in \cite{Kirac:2020}.
\begin{remark}
  It is obvious that in Theorem \ref{thm2.3} the second condition can be replaced with $\lambda_n = \tilde \lambda_n  \min_{x \in [0, 1]}  \check p(x)$.
\end{remark}

\section{Exploring the Lowest Eigenvalue}\label{sec3}
For the case $\sin \gamma \neq 0$ and $\sin \delta \neq 0$, i.e.~$\gamma, \delta \in (0, \pi)$, the boundary conditions \eqref{eq1.5}--\eqref{eq1.6} can be written as
\begin{gather*}
a \, y(0) + y'(0) = 0, \\
b \, y(\pi) + y'(\pi) = 0.
\end{gather*}
In \cite{Isaacson-McKean-Trubowitz:1984} the authors showed, that for a Sturm-Liouville problem with fixed boundary conditions $(a, b)$ and a $q \in L^2(0, \pi)$, the lowest eigenvalue has the property
\begin{equation*}
  -\infty < \mu_0(q, a, b) \leq \mu_0 \left( e, - (a-b)/2, (a-b)/2  \right),
\end{equation*}
where $e$ is a known even function ($e(x) = e(\pi-x)$) from $L^2(0, \pi)$,  which is uniquely defined by the eigenvalues $\mu_n(q, a, b), n \geq 1$ and the Dirichlet boundary conditions $y(0) = y(\pi) = 0$.

In \cite{Poschel-Trubowitz:1987}, for a $q \in L^\infty(0, \pi)$ and the Dirichlet boundary conditions, the following bounds for the lowest eigenvalue is proved:
\begin{equation}\label{eq3.1}
  | \mu_0(q, \pi, 0) - \mu_0(0, \pi, 0) | \leq \| q \|_\infty.
\end{equation}

In \cite{Harutyunyan:2008}, for a $q$ from $L^1_\mathbb{R}(0, \pi)$ (in particular $L^\infty_\mathbb{R}(0, \pi)$) and $\gamma \in (0, \pi]$,  $\delta \in [0, \pi)$, a formula for $\mu_n(q, \gamma, \delta)$ is found, and particularly for $n = 0$:
\begin{equation}\label{eq3.2}
  \mu_0(q, \gamma, \delta) = \mu_0(0, \gamma, \delta) +
  \displaystyle \int_{0}^{1} \int_{0}^{\pi} q(x) h^2_0(x, tq, \gamma, \delta) \, dx \, dt,
\end{equation}
where $h_0(x, tq, \gamma, \delta)$ is a normalized eigenfunction ($\int_{0}^{\pi} h^2_0(x, tq, \gamma, \delta) dx = 1 $) of the problem $L(tq, \gamma, \delta)$ and where $t$ is a real parameter.
From \eqref{eq3.2} the generalization of \eqref{eq3.1} follows:
\begin{equation*}
  | \mu_0(q, \gamma, \delta) - \mu_0(0, \gamma, \delta) | \leq \| q \|_\infty,
\end{equation*}
which can be written as
\begin{equation}\label{eq3.3}
  -\esssup |q| \leq \mu_0(q, \gamma, \delta) - \mu_0(0, \gamma, \delta)  \leq \esssup |q|.
\end{equation}
In the same work it is shown that the lowest eigenvalue tends to $-\infty$:
\begin{gather*}
    \displaystyle \lim_{\gamma \rightarrow 0} \mu_0(q, \gamma, \delta) = - \infty, \qquad \mbox{for fixed} \; \delta \in [0, \pi), \\
    \displaystyle \lim_{\delta \rightarrow \pi} \mu_0(q, \gamma, \delta) = - \infty, \qquad \mbox{for fixed} \; \gamma \in (0, \pi].
\end{gather*}

We found narrower bounds for the lowest eigenvalue of the Sturm-Liouville operator.
\begin{theorem}\label{thm3.1}
    The lowest eigenvalue of the operator $L(q, \gamma, \delta)$ has the property
    \begin{equation*}
      \essinf q  \leq  \mu_0(q, \gamma, \delta) - \mu_0(0, \gamma, \delta) \leq  \esssup q.
    \end{equation*}
\end{theorem}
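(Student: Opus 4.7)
The plan is to read off the bounds directly from the integral representation \eqref{eq3.2}, tightening the crude estimate used to derive \eqref{eq3.3}. The argument in \cite{Harutyunyan:2008} that produced \eqref{eq3.3} evidently bounded the integrand pointwise by $|q(x)|\cdot h_0^2(x,tq,\gamma,\delta)$, which wastes information because $h_0^2 \geq 0$. I would exploit this nonnegativity to pull the essential infimum and supremum of $q$ itself (not of $|q|$) outside.

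Concretely, I would start from
\begin{equation*}
  \mu_0(q,\gamma,\delta) - \mu_0(0,\gamma,\delta) = \int_0^1 \int_0^\pi q(x)\, h_0^2(x, tq, \gamma, \delta)\, dx\, dt,
\end{equation*}
and for each fixed $t \in [0,1]$ apply the elementary inequality
\begin{equation*}
  \essinf q \cdot h_0^2(x,tq,\gamma,\delta) \leq q(x)\, h_0^2(x,tq,\gamma,\delta) \leq \esssup q \cdot h_0^2(x,tq,\gamma,\delta),
\end{equation*}
valid a.e.\ because $h_0^2 \geq 0$. Integrating in $x$ and using the normalization $\int_0^\pi h_0^2(x,tq,\gamma,\delta)\, dx = 1$ yields
\begin{equation*}
  \essinf q \leq \int_0^\pi q(x)\, h_0^2(x,tq,\gamma,\delta)\, dx \leq \esssup q
\end{equation*}
for every $t \in [0,1]$. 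A final integration in $t$ over the unit interval preserves the bounds and delivers the claim.

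There is no real obstacle here beyond invoking \eqref{eq3.2} correctly and remembering that $h_0$ is the normalized eigenfunction. The only thing worth a brief comment in the write-up is that the resulting estimate is genuinely narrower than \eqref{eq3.3}, since $-\esssup |q| \leq \essinf q$ and $\esssup q \leq \esssup |q|$, with equality on either side only in the degenerate case that $q$ has a constant sign a.e.
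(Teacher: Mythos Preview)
Your argument is correct: formula~\eqref{eq3.2} together with $h_0^2\ge 0$ and the normalization $\int_0^\pi h_0^2\,dx=1$ immediately gives the two-sided bound after the obvious pointwise sandwich and the $t$-integration over $[0,1]$.

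This is, however, not the route the paper takes. The paper derives Theorem~\ref{thm3.1} as a corollary of Theorem~\ref{thm1.3} (the Ambarzumyan-type result from \cite{Ashrafyan:2018}) by specializing to $\tilde q\equiv 0$, so that $\hat q=q$ and $\tilde\mu_0=\mu_0(0,\gamma,\delta)$; the inequality is then read off from (the proof of) that theorem. Your approach bypasses Theorem~\ref{thm1.3} entirely and instead sharpens the very estimate the paper already carries out on \eqref{eq3.2} to obtain \eqref{eq3.3}, simply by not discarding the sign of $q$. The advantage of your route is that it is self-contained within the material already displayed in the paper and makes transparent exactly where the improvement over \eqref{eq3.3} comes from; the paper's route, on the other hand, packages the bound together with its equality case (the Ambarzumyan rigidity), which your computation does not directly yield.
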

\begin{remark}
    Theorem \ref{thm3.1} is also true for arbitrary self-adjoint boundary conditions.
\end{remark}
Theorem \ref{thm3.1} is a corollary of Theorem \ref{thm1.3}, when we take $\tilde q(x) \equiv 0$.
We can see that our bounds do not contradict the properties mentioned above and are more narrow than in \eqref{eq3.3} because of
\begin{equation*}
  - \esssup |q| \, \leq \, \essinf q \, \leq \, \esssup q \, \leq \, \esssup |q|.
\end{equation*}

\begin{figure}[!ht]
  \center
  \begin{subfigure}[b]{0.4\linewidth}
    \includegraphics[width=\linewidth]{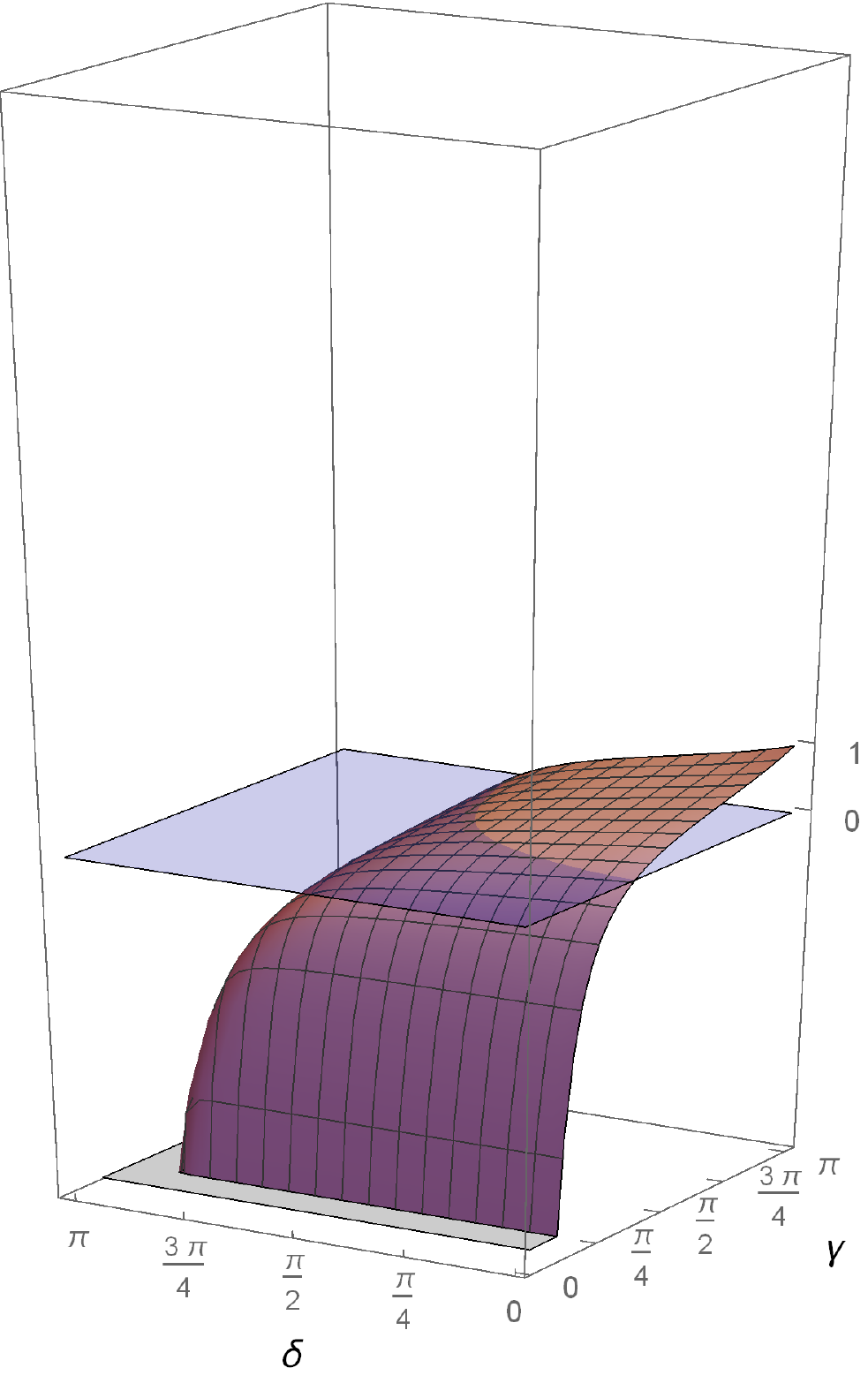}
    \caption{$\mu_0(0, \gamma, \delta)$}
  \end{subfigure}
  \begin{subfigure}[b]{0.42\linewidth}
    \includegraphics[width=\linewidth]{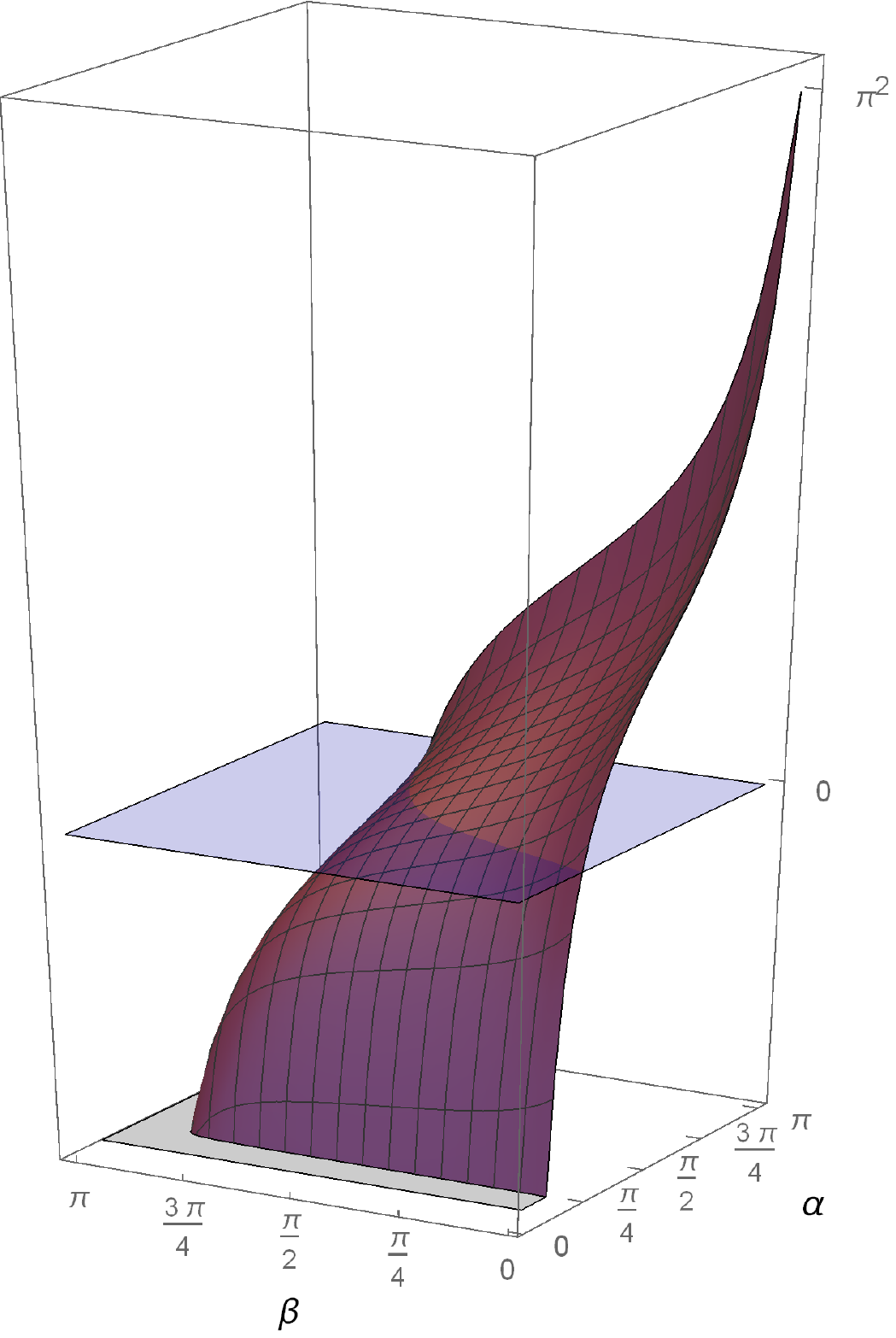}
    \caption{$\lambda_0(1, \alpha, \beta)$}
  \end{subfigure}
  \caption{Illustration of the first eigenvalues of $L(0, \gamma, \delta)$ and $S(1, \alpha, \beta)$.}
  \label{fig:first}
  \endcenter
\end{figure}

For the first eigenvalue of the vibrating string equation we obtain the following bounds.
\begin{theorem}\label{thm3.2}
    The lowest eigenvalue of operator $S(p, \alpha, \beta)$ has the property
    \begin{equation*}
      \lambda_0(1, \alpha, \beta) \min_{x \in [0,\pi]} p^{-1}(x) \leq
      \lambda_0(p, \alpha, \beta) \leq
      \lambda_0(1, \alpha, \beta) \max_{x \in [0,\pi]} p^{-1}(x).
    \end{equation*}
\end{theorem}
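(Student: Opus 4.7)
My plan is to specialize the comparison-by-integration-by-parts technique from the proof of Theorem~\ref{thm2.1} to the case $\tilde p \equiv 1$, so that $\tilde\lambda_0 = \lambda_0(1,\alpha,\beta)$ and $\tilde\varphi_0$ is the associated first eigenfunction. Since both $\varphi_0$ and $\tilde\varphi_0$ satisfy the same boundary conditions \eqref{eq1.2}--\eqref{eq1.3}, cross-multiplying the two eigenvalue equations $-\varphi_0'' = \lambda_0(p,\alpha,\beta)\,p(x)\varphi_0$ and $-\tilde\varphi_0'' = \lambda_0(1,\alpha,\beta)\,\tilde\varphi_0$, subtracting, and integrating on $[0,1]$ will produce, after the boundary contribution vanishes for exactly the reason given in Theorem~\ref{thm2.1}, the identity
\begin{equation*}
\int_0^1 \varphi_0(x)\,\tilde\varphi_0(x)\,\bigl[\lambda_0(p,\alpha,\beta)\,p(x)-\lambda_0(1,\alpha,\beta)\bigr]\,dx \;=\; 0.
\end{equation*}

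The second step is to invoke Sturm's oscillation theorem: both $\varphi_0$ and $\tilde\varphi_0$ have no zero in $(0,1)$, so after choosing signs the product $\varphi_0\tilde\varphi_0$ is strictly positive there. This forces the bracketed factor $\lambda_0(p,\alpha,\beta)\,p(x)-\lambda_0(1,\alpha,\beta)$ either to vanish a.e.\ (the constant-$p$ case, in which the desired bounds collapse to equalities) or to take values of both signs on sets of positive measure. In the nontrivial case the essential range of $\lambda_0(p,\alpha,\beta)\,p(x)$ must straddle $\lambda_0(1,\alpha,\beta)$, and because $p$ itself ranges over $[\min p,\max p]$, this pins $\lambda_0(1,\alpha,\beta)$ between $\lambda_0(p,\alpha,\beta)\min p$ and $\lambda_0(p,\alpha,\beta)\max p$.

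The final step is a mechanical rearrangement: dividing the two-sided sandwich by the strictly positive quantities $\min p$ and $\max p$, and identifying $1/\max p = \min p^{-1}$ and $1/\min p = \max p^{-1}$, delivers the form advertised in the theorem. The step I expect to require the most care is the sign bookkeeping when $\lambda_0(p,\alpha,\beta)$ happens to be negative, which does occur for some $(\alpha,\beta)$ as Remark~\ref{rem1} indicates; in that regime the multiplication by $\lambda_0(p,\alpha,\beta)$ reverses the direction of the inequalities, and one has to check that repackaging the sandwich in terms of $\lambda_0(1,\alpha,\beta)\,\min p^{-1}$ and $\lambda_0(1,\alpha,\beta)\,\max p^{-1}$ still yields an estimate with the orientation displayed in the theorem.
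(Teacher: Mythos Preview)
Your proposal is correct and is exactly the route the paper takes: the paper's entire proof reads ``This is a corollary of Theorem~\ref{thm2.1},'' and your plan is precisely the unpacking of that corollary---specialize $\tilde p\equiv 1$, reuse the integration-by-parts identity and Sturm oscillation from the proof of Theorem~\ref{thm2.1}, and read off the two-sided bound. Your flagged concern about sign bookkeeping when $\lambda_0<0$ is legitimate and is not addressed by the paper either; indeed, in that regime the displayed inequality has its two endpoints in the ``wrong'' order, so the chain should really be read as $\lambda_0(p,\alpha,\beta)$ lying between $\lambda_0(1,\alpha,\beta)\min p^{-1}$ and $\lambda_0(1,\alpha,\beta)\max p^{-1}$ without a fixed orientation.
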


\begin{proof}
 This is a corollary of Theorem \ref{thm2.1}.
\end{proof}

The graphs of eigenvalues $\mu_0(0, \gamma, \delta)$ and $\lambda_0(1, \alpha, \beta)$ are in Figure \ref{fig:first}.
The relation between eigenvalues of $L(q, \gamma, \delta)$ and $S(p,  \alpha, \beta)$ can be found in the appendix.

\begin{example}
Consider the boundary value problem $S(x^r+1, \pi, 0)$, for a fixed $r \in \mathbb{R}_+$,
\begin{gather*}
- u'' = \lambda (x^r+1) u, \\
u(0) = u(1) = 0.
\end{gather*}
Then $ \lambda_0(1, \pi, 0) = \pi^2$, and $\displaystyle \max_{x \in [0,\pi]} (x^r+1)^{-1} = 1$, $\displaystyle \min_{x \in [0,\pi]} (x^r+1)^{-1} = 1/2$.
Theorem \ref{thm3.2} implies the following assertion.
\begin{corollary}
For any fixed $r \in \mathbb{R}_+$:
    \begin{equation*}
      \pi^2 /2 \leq
      \lambda_0(x^r+1, \pi, 0) \leq
      \pi^2.
    \end{equation*}
For instance, when r=1,2, the lowest eigenvalues are $\lambda_0(x+1, \pi, 0) \approx 6.548$ and $\lambda_0(x^2+1, \pi, 0) \approx 7.643$.
\end{corollary}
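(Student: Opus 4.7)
The plan is to specialize the integral identity derived in the proof of Theorem~\ref{thm2.1} to a constant comparison density. Taking $\tilde p \equiv 1$ gives $\tilde\lambda_0 = \lambda_0(1,\alpha,\beta)$ and $\check p = p^{-1}$, and the identity from that proof collapses to
$$\int_0^1 \varphi_0(x)\,\tilde\varphi_0(x)\,\bigl[\lambda_0\, p(x) - \tilde\lambda_0\bigr]\,dx = 0,$$
where $\varphi_0$ and $\tilde\varphi_0$ are the ground-state eigenfunctions of $S(p,\alpha,\beta)$ and $S(1,\alpha,\beta)$, respectively.

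Next, I would invoke Sturm's oscillation theorem, exactly as in the proof of Theorem~\ref{thm2.1}, to guarantee that $\varphi_0\tilde\varphi_0$ is of constant sign on $(0,1)$; after a sign normalization we may assume $\varphi_0(x)\tilde\varphi_0(x) > 0$ there. The vanishing of the above integral then forces the bracketed factor $\lambda_0 p(x) - \tilde\lambda_0$ to take both non-positive and non-negative values on $[0,1]$, so that there exist points $x_1, x_2 \in [0,1]$ with
$$\lambda_0\, p(x_1) \le \tilde\lambda_0 \le \lambda_0\, p(x_2).$$
Since $p$ is bounded away from zero, the values $p(x_i)$ are strictly positive; dividing each inequality by the corresponding $p(x_i)$ preserves its direction and gives $\lambda_0 \le \tilde\lambda_0\, p^{-1}(x_1)$ and $\lambda_0 \ge \tilde\lambda_0\, p^{-1}(x_2)$. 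Bounding $p^{-1}(x_i)$ by its extrema on $[0,1]$ then yields the desired double inequality.

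The only delicate point is sign-bookkeeping: one must orient the two eigenfunctions so that their product is positive on $(0,1)$, and the final step of replacing $\tilde\lambda_0 p^{-1}(x_i)$ by $\tilde\lambda_0\max p^{-1}$ or $\tilde\lambda_0\min p^{-1}$ tacitly uses the positivity of $\tilde\lambda_0 = \lambda_0(1,\alpha,\beta)$, which is the regime of interest (cf.~the example following the statement). No genuine obstacle is expected, and the result is indeed a direct corollary of the argument used for Theorem~\ref{thm2.1}.
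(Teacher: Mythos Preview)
Your argument is correct and is essentially the paper's own route: the paper obtains the corollary by invoking Theorem~\ref{thm3.2}, whose proof is declared a corollary of (the proof of) Theorem~\ref{thm2.1}, and you have simply spelled out that implication via the integral identity and Sturm oscillation, then specialized to $\tilde p\equiv 1$, $\alpha=\pi$, $\beta=0$. The positivity of $\tilde\lambda_0=\lambda_0(1,\pi,0)=\pi^2$ that you flag is exactly what makes the final min/max replacement valid here; plugging in $\max p^{-1}=1$ and $\min p^{-1}=1/2$ finishes the job.
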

\end{example}

The next theorem shows, that for any density function $p(x)$ the sign of the first eigenvalue $\lambda_0(p, \alpha, \beta)$ only depends on the boundary parameters.
\begin{theorem}\label{thm3.3}
    The lowest eigenvalue of the operator $S(p, \alpha, \beta)$ has the property
    \begin{equation*}
        \lambda_0(p, \alpha, \beta) \left\{
            \begin{array}{lll}
                =0, & \cos \alpha \cos \beta - \sin (\alpha - \beta)=0, & \alpha, \beta \in [\pi/4, \pi] \times [0, 3 \pi/4],\\
                >0, & \cos \alpha \cos \beta - \sin (\alpha - \beta)<0, & \alpha, \beta \in (\pi/4, \pi] \times [0, 3 \pi/4),\\
                <0, & \mbox{otherwise}.
            \end{array}
            \right.
    \end{equation*}
\end{theorem}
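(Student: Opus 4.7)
The plan is to reduce to the constant-density case, determine when $\lambda = 0$ is an eigenvalue and whether it is the ground state via Sturm's oscillation theorem, and then use continuity of $\lambda_0$ in $(\alpha, \beta)$ to handle the complement.

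First I would invoke Theorem~\ref{thm3.2}: since $p$ is positive and bounded away from $0$, both $\min p^{-1}$ and $\max p^{-1}$ are strictly positive, so the two-sided bound forces
\[
\mathrm{sign}\,\lambda_0(p, \alpha, \beta) \;=\; \mathrm{sign}\,\lambda_0(1, \alpha, \beta).
\]
It therefore suffices to prove the theorem for $p \equiv 1$. Setting $\lambda = 0$ in $-u'' = \lambda u$ gives $u(x) = Ax + B$, and imposing the boundary conditions \eqref{eq1.2}--\eqref{eq1.3} yields a $2 \times 2$ homogeneous system whose determinant simplifies to $\sin(\alpha - \beta) - \cos\alpha\cos\beta$. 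Thus $0$ is an eigenvalue exactly when $\cos\alpha\cos\beta - \sin(\alpha - \beta) = 0$, with corresponding eigenfunction $u_0(x) = \sin\alpha - \cos\alpha \cdot x$ (up to a scalar).

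Next I would decide when this $0$ is the ground state by Sturm's oscillation theorem. The linear eigenfunction $u_0$ vanishes only at $x = \tan\alpha$ (or nowhere, when $\cos\alpha = 0$), and this zero lies in $(0, 1)$ iff $\tan\alpha \in (0, 1)$, i.e.\ iff $\alpha \in (0, \pi/4)$. Hence for $\alpha \in [\pi/4, \pi]$ the eigenfunction $u_0$ has no interior zero and so is the ground state, producing $\lambda_0 = 0$. Rewriting the curve equation as $\tan\alpha - \tan\beta = 1$ (valid wherever both tangents are defined, with an appropriate limit at $\alpha = \pi/2$) shows that $\alpha \in [\pi/4, \pi]$ on the curve corresponds precisely to $\beta \in [0, 3\pi/4]$; this establishes case (i). For $\alpha \in (0, \pi/4)$ on the curve, $u_0$ has one interior zero and is thus the first excited eigenfunction, forcing $\lambda_0 < 0$, which falls under the "otherwise" case.

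To finish I would use continuous dependence of $\lambda_0$ on $(\alpha, \beta)$: since $\lambda_0$ vanishes only on the arc identified above, its sign is constant on each connected component of the complement of this arc in $(0, \pi] \times [0, \pi)$. I would verify that $(\pi, 0)$ (Dirichlet--Dirichlet, with $\lambda_0 = \pi^2 > 0$) lies in the set $\{\cos\alpha\cos\beta - \sin(\alpha - \beta) < 0\} \cap \bigl((\pi/4, \pi] \times [0, 3\pi/4)\bigr)$, and check that this set is connected, whence $\lambda_0 > 0$ throughout. On every other component I would exhibit a test point with $\lambda_0 < 0$: the quickest witnesses use the quadratic form $Q(u) = \int_0^1 (u')^2\,dx + \cot\beta\,u(1)^2 - \cot\alpha\,u(0)^2$ with $u \equiv 1$, giving $Q(1) = \cot\beta - \cot\alpha$, which is negative whenever $\alpha < \beta$ in $(0, \pi)$. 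The main obstacle I anticipate is the topological bookkeeping: enumerating the connected components of the complement of the arc, choosing a sign-determining test point in each, and properly handling the Dirichlet boundary cases $\alpha = \pi$ or $\beta = 0$, where $\cot$ blows up and the quadratic form must be restricted to functions satisfying the corresponding Dirichlet constraint.
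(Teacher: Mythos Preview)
Your strategy is sound and yields a correct proof, but it differs substantially from the paper's argument. The paper does not use continuity/connectedness or the Rayleigh quotient at all. For the $=0$ case the paper works with general $p$ directly: it verifies that the determinant condition makes $\lambda=0$ an eigenvalue (same as you), but to show it is the \emph{lowest} eigenvalue it assumes $\lambda_0=-\sigma^2<0$, integrates $u_0''=\sigma^2 p u_0$ twice, and derives a sign contradiction from the boundary relations case by case. For the $>0$ case the paper reduces to $p\equiv 1$ via Theorem~\ref{thm3.2}, assumes $\lambda_0=-\sigma^2$, writes $u=Ae^{|\sigma|x}+Be^{-|\sigma|x}$, computes the boundary determinant explicitly, and shows it is strictly positive using the inequality $\cos\alpha\cos\beta-\sin(\alpha-\beta)<0$ together with $|\sigma|\cosh|\sigma|>\sinh|\sigma|$. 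The $<0$ case is declared analogous.

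Your route via Sturm oscillation on the explicit linear eigenfunction $u_0(x)=\sin\alpha-x\cos\alpha$ is more conceptual and immediately explains the restriction $\alpha\in[\pi/4,\pi]$ (the zero $x=\tan\alpha$ leaves $(0,1)$ exactly there); the paper's integration argument obtains the same restriction only through a trigonometric case split. Your continuity-plus-test-point argument replaces the paper's explicit determinant computation and is shorter, at the cost of the topological bookkeeping you flag. Two small points to watch: (i) your opening reduction to $p\equiv 1$ via Theorem~\ref{thm3.2} is circular \emph{on the curve itself}, since Theorem~\ref{thm3.2} rests on Theorem~\ref{thm2.1}, which excludes that curve; fortunately your oscillation argument for the $=0$ case does not actually need the reduction, since the $\lambda=0$ eigenfunction is the same linear function for every $p$---so just run that step for general $p$; (ii) the set $\{\cos\alpha\cos\beta-\sin(\alpha-\beta)<0\}$ is \emph{not} globally one side of the zero-arc of $\lambda_0$ (for instance $(\pi/8,7\pi/8)$ lies in it but in the ``otherwise'' region), so when identifying the positive component with $\{<0\}\cap\bigl((\pi/4,\pi]\times[0,3\pi/4)\bigr)$ you really do need the intersection with the rectangle, and should check that this intersection coincides with the connected component of $(\pi,0)$.
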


\begin{proof}
 First, we will show that the lowest eigenvalue  $\lambda_0$ is zero for boundary parameters satisfying
\begin{equation}\label{eq4.1}
  \cos \alpha \cos \beta - \sin(\alpha - \beta) = 0, \qquad \alpha, \beta \in [\pi/4, \pi] \times [0, 3\pi/4].
\end{equation}
Let us check that $\lambda = 0$ is an eigenvalue.
When $\lambda = 0$ the equation \eqref{eq1.1} becomes
\begin{equation*}
  u''(x) = 0,
\end{equation*}
hence, in order for $u(x)$ to be an eigenfunction of $S(p,  \alpha, \beta)$ related to an eigenvalue $\lambda = 0$, it should be linear $u(x) = k x + c$, where $k^2 + c^2 \neq 0$, and satisfy boundary conditions, \eqref{eq1.2}--\eqref{eq1.3}
\begin{gather*}
u(0) \cos \alpha + u'(0) \sin \alpha = 0, \\
u(1) \cos \beta + u'(1) \sin \beta = 0.
\end{gather*}
For $u(x) = k x + c$, we consider the system of linear equations
\begin{gather*}
c \cos \alpha + k \sin \alpha = 0, \qquad  \qquad \\
c \cos \beta + k (\sin \beta + \cos \beta) = 0.
\end{gather*}
In order to obtain a unique solution for $k$ and $c$, the determinant should be zero:
\begin{equation*}
\left|
\begin{array}{cc}
  \cos \alpha & \sin \alpha \\
  \cos \beta & \sin \beta + \cos \beta
\end{array}
\right|
=
\cos \alpha \sin \beta + \cos \alpha \cos \beta - \sin \alpha \cos \beta
\end{equation*}
which is zero by the condition.
Therefore, $\lambda = 0$ is an eigenvalue, and it remains to show that this is the smallest eigenvalue.
Let there be a number $n_0 > 0$ such that $\lambda_{n_0} = 0$.
This means that there exists $\sigma \neq 0$, such that the smallest eigenvalue $\lambda_0 = - \sigma^2$.
Thus, the equation \eqref{eq1.1} can be rewritten as
\begin{equation}\label{eq4.0}
  u''_0(x) = \sigma^2 p(x) u_0(x).
\end{equation}

Now we separate the curve into following three cases:
\begin{enumerate}
  \item $\alpha \in [\pi/4, \pi/2), \beta \in [0, \pi/2)$,
  \item $\alpha = \beta = \pi/2$,
  \item $\alpha \in (\pi/2, \pi], \beta \in (\pi/2, 3\pi/4]$.
\end{enumerate}
We will only show the proof for the first case since the other two cases are completely similar.
In this case the boundary conditions become
\begin{gather*}
u_0(0) + u'_0(0) (\tan \beta + 1) = 0,\\
u_0(1) + u'_0(1) \tan \beta = 0.
\end{gather*}
Subtracting from the second condition the first one, we obtain
\begin{equation*}
  (u'_0(1) - u'_0(0)) \tan \beta + (u_0(1) - u_0(0) - u'_0(0)) = 0.
\end{equation*}
For simplicity, it can be written as
\begin{equation*}
  a \tan \beta + b = 0.
\end{equation*}
To find $a$ and $b$, we integrate \eqref{eq4.0} from $0$ to $0<t\leq1$ and afterwards, from $0$ to $1$:
\begin{gather*}
u'_0(t) - u'_0(0)= \sigma^2 \int_{0}^{t} p(x) u_0(x) dx, \\
u'_0(1) - u'_0(0)= \sigma^2 \int_{0}^{1} p(x) u_0(x) dx = a, \\
u_0(1) - u_0(0) - u'(0)= \sigma^2 \int_{0}^{1} \int_{0}^{t} p(x) u_0(x) dx dt = b.
\end{gather*}
Since the density function $p(x)$ is positive and $u_0(x)$ has no zeros, due to the oscillation theorem, in the whole interval $(0, 1)$, therefore $a \neq 0$, $b \neq 0$ and they have the same sign.
Now, please note that $\tan \beta \geq 0$, for $\beta \in [0, \pi/2)$, so the relation $a \tan \beta + b = 0$ is impossible.
Thus, the first eigenvalue cannot be negative and $\lambda_0 = 0$.

Here we proof the second assertion of the theorem, that is $\lambda_0(p, \alpha, \beta) > 0$, when $\alpha, \beta \in [\pi/4, \pi] \times [0, 3 \pi/4]$ and
\begin{equation}\label{eq4.2}
  \cos \alpha \cos \beta - \sin (\alpha - \beta)<0.
\end{equation}
The third one of the theorem can be handled in a similar way.
Please note, that we can show this for $\lambda_0(1, \alpha, \beta)$, after it, in virtue of Theorem \ref{thm3.2}, it will be spread on $\lambda_0(p, \alpha, \beta)$.

Assume that $\lambda_0(1, \alpha, \beta) < 0$, i.e.~there exists $\sigma \neq 0$, such that $\lambda_0 = - \sigma^2$.
So, the boundary value problem \eqref{eq1.1}--\ref{eq1.3} for $p(x)=1$ is being written as
\begin{gather*}
u''_0(x) = \sigma^2 u_0(x), \\
u_0(0) \cos \alpha + u'_0(0) \sin \alpha  = 0,\\
u_0(1) \cos \beta + u'_0(1) \sin \beta = 0.
\end{gather*}
The solution of this problem has the following form:
\begin{equation*}
    u(x) = A e^{|\sigma|x} + B e^{-|\sigma|x},
\end{equation*}
where $A^2 + B^2 \neq 0$.
Inserting this into the boundary conditions, we obtain
\begin{gather*}
A (\cos \alpha + |\sigma|\sin \alpha)  + B (\cos \alpha - |\sigma| \sin \alpha) = 0, \qquad  \qquad \\
A e^{|\sigma|} (\cos \beta + |\sigma|\sin \beta) + B e^{-|\sigma|} (\cos \beta - |\sigma| \sin \beta) = 0.
\end{gather*}
To have a unique solution for $A$ and $B$, the determinant should be zero:
\begin{gather*}
D :=
\left|
\begin{array}{cc}
  \cos \alpha + |\sigma|\sin \alpha & \cos \alpha - |\sigma| \sin \alpha \\
  e^{|\sigma|} (\cos \beta + |\sigma|\sin \beta) & e^{-|\sigma|} (\cos \beta - |\sigma| \sin \beta)
\end{array}
\right|
= \\
=
\cos \alpha \cos \beta (e^{-|\sigma|}-e^{|\sigma|}) +
\sin (\alpha - \beta) |\sigma| (e^{-|\sigma|}+e^{|\sigma|}) +
\sin \alpha \sin \beta \sigma^2  (e^{|\sigma|}-e^{-|\sigma|}) = \\
=
2 (- \cos \alpha \cos \beta \sinh |\sigma| +
\sin (\alpha - \beta) |\sigma| \cosh |\sigma| +
\sin \alpha \sin \beta \sigma^2  \sinh |\sigma|).
\end{gather*}
Taking into account \eqref{eq4.2}, for $D$ we obtain
\begin{equation*}
  D > 2 (\cos \alpha \cos \beta ( |\sigma| \cosh |\sigma| - \sinh |\sigma|)+
\sin \alpha \sin \beta \sigma^2  \sinh |\sigma|),
\end{equation*}
or
\begin{equation*}
  D > 2 (\sin (\alpha - \beta) ( |\sigma| \cosh |\sigma| - \sinh |\sigma|)+
\sin \alpha \sin \beta \sigma^2  \sinh |\sigma|).
\end{equation*}
Please note that $\sin \alpha \sin \beta \sigma^2  \sinh |\sigma| \geq 0$, for all $\alpha, \beta \in (\pi/4, \pi] \times [0, 3 \pi/4)$.
Let us show that $|\sigma| \cosh |\sigma| - \sinh |\sigma| > 0$.
Consider
\begin{equation}\label{eq4.3}
  f(x) := x \cosh x - \sinh x
\end{equation}
for $x>0$.
Differentiating \eqref{eq4.3}, we obtain
\begin{equation*}
  f'(x) = x \sinh x.
\end{equation*}
Since $f(0)=0$ and $f'(x)>0$, for all $x>0$, thus $f(x)>0$.

Now we separate the domain into the following three subdomains:
\begin{enumerate}
  \item $\alpha \in (\pi/4, \pi/2], \beta \in [0, \pi/2)$,
  \item $\alpha \in (\pi/2, \pi], \beta \in [0, \pi/2)$,
  \item $\alpha \in (\pi/2, \pi], \beta \in [\pi/2, 3\pi/4)$.
\end{enumerate}
For the first and the third case $\cos \alpha \cos \beta \geq 0$, and for the second case $\sin (\alpha - \beta) \geq 0$.
Thus $D > 0$ and we come to contradiction.
This completes the proof.
\end{proof}

\begin{remark}
 Since on the curve $\cos \alpha \cos \beta - \sin(\alpha - \beta) = 0$, for $\alpha, \beta \in [\pi/4, \pi] \times [0, 3\pi/4]$, the first eigenvalue is always zero, thus the inequality in Theorem \ref{thm3.2} holds also for this case.
\end{remark}

\section{Conclusion}\label{sec4}
This paper deals with the inverse spectral theory of vibrating string equations subject to separated, self-adjoint boundary conditions.
It is shown, that in spite of the fact that there is no classical Ambarzumyan's theorem analog for string equations, the first eigenvalue Ambarzumyan-type theorems are valid for some domain in the boundary conditions (Theorems \ref{thm2.1}--\ref{thm2.2} and the extension for the $n$-th eigenvalue in Theorem \ref{thm2.3}).

The curve in the boundary conditions' domain ($\alpha, \beta \in (0, \pi] \times [0, \pi)$), on which no analog of the first eigenvalue theorem is possible, is given by the equation $\cos \alpha \cos \beta - \sin(\alpha - \beta) = 0$, for $\alpha, \beta \in [\pi/4, \pi] \times [0, 3\pi/4]$ (see Figure \ref{fig:zeros}).
The explanation simply follows from one property of the first eigenvalue, as on that curve it is always zero, despite of the density function $p(x)$.
This property is given in Theorem \ref{thm3.3} together with another property, which states that the sign of the first eigenvalue is always positive (negative) on the right (left) area from that curve in the boundary parameters' domain.

It is well known that the set of eigenvalues is bounded from below.
In Theorem \ref{thm3.2}, the exact lower and upper bounds are found for the lowest eigenvalue depending on boundary parameters and the density function.

\section*{Acknowledgements}

This work has been supported by KAUST (individual baseline funding).

\bibliography{References}

\section*{Appendix: Relation of $S(p, \alpha, \beta)$ and $L(q, \gamma, \delta)$}\label{sec5}
Let the density function be $p \in C^2 [0,1]$. Then we can use the Liouville transformation (see e.g.~\cite{Shen:2005, Shen:2005-2}):
\begin{gather*}
  s(x)  = \dfrac{\pi}{c}\int_{0}^{x} \sqrt{p(t)} dt,\,\,\,
  y(s) = \dfrac{\pi^2}{c^2} p^{1/4}(x)u(x),  \\
  q(s) = p^{-1/4}(x)\frac{d^{2}}{ds^{2}}(p^{1/4}(x)),
\end{gather*}
where $c=\int_{0}^{1} \sqrt{p(t)} dt$, to transform $S(p, \alpha, \beta)$ into $L(q, \gamma, \delta)$.
Please note that $s(0) = 0$ and $s(1) = \pi$, therefore $s\in[0, \pi]$.
For boundary parameters we obtain
\begin{equation*}
\left\{
\begin{array}{cc}
    \cot \alpha = \dfrac{c}{\pi} \left( \dfrac{\cot \gamma}{p^{1/2}(0)} + \dfrac{p'(0)}{4 p^{3/2}(0)} \right), & \textrm{if} \; \gamma \in (0, \pi),\\
    \alpha = \pi, & \mbox{if} \; \gamma = \pi,
\end{array}\right.
\end{equation*}
\begin{equation*}
\left\{
\begin{array}{cc}
  \beta = 0, & \textrm{if} \; \delta = 0, \\
  \cot \beta = \dfrac{c}{\pi} \left( \dfrac{\cot{\delta}}{p^{1/2}(1)} + \dfrac{p'(1)}{4 p^{3/2}(1)} \right), & \textrm{if} \; \delta \in (0, \pi),
\end{array} \right.
\end{equation*}
and for the eigenvalues
\begin{equation*}
\lambda_n(p, \alpha, \beta) = \dfrac{\pi^2}{c^2} \mu_n(q, \gamma, \delta) .
\end{equation*}
When we take $p(x) \equiv 1$, then $c = 1$ and $q(x) \equiv 0$, thus $\lambda_n(1, \alpha, \beta) = \pi^2 \mu_n(0, \gamma, \delta)$. The latter relation is demonstrated in Figure \ref{fig:first}.

\end{document}